\numberwithin{equation}{section}  
\theoremstyle{plain}
\newtheorem{thm}{Theorem}[section]
\theoremstyle{definition}
\theoremstyle{remark}
\def\R{{\mathbb R}}
\def\Rn{{{\mathbb R}^n}}
\def\N{{\mathbb N}}
\def\FT{{\mathcal F}}
\def\L2tx{{L^2(\R_t\times\R^n_x)}}
\def\p#1{{\left({#1}\right)}}
\def\b#1{{\left\{{#1}\right\}}}
\def\n#1{{\left\|{#1}\right\|}}
\def\abs#1{{\left|{#1}\right|}}
\def\jp#1{{\left\langle{#1}\right\rangle}}
\def\supp{\operatorname{supp}}
\def\va{\varphi}
\title{Trace theorems: critical cases and best constants
}
\author[]{Michael Ruzhansky and Mitsuru Sugimoto}
\address{
  Michael Ruzhansky:
  \endgraf
  Department of Mathematics
  \endgraf
  Imperial College London
  \endgraf
  180 Queen's Gate, London SW7 2AZ, UK
  \endgraf
  {\it E-mail address} {\rm m.ruzhansky@imperial.ac.uk}
  \endgraf
  \medskip
 Mitsuru Sugimoto:
  \endgraf
  Graduate School of Mathematics
  \endgraf
  Nagoya University
  \endgraf
  Furocho, Chikusa-ku, Nagoya 464-8602, Japan
  \endgraf
  {\it E-mail address} {\rm sugimoto@math.nagoya-u.ac.jp}
  }
\thanks{The first
author was supported by the EPSRC Leadership
Fellowship EP/G007233/1.
}
\date{\today}
\begin{document}

\begin{abstract}
The purpose of this paper is to present the critical cases of the trace theorems for
the restriction of functions to closed surfaces, and to 
give the asymptotics for the norms of the traces under
dilations of the surface.
We also discuss the best constants for them.
\end{abstract}

\maketitle

\section{Introduction}

It is very well known that if $s>1/2$ and $\Sigma\subset\Rn$ is a closed 
hypersurface, $n\geq 2$, then we have the following trace theorem:
\begin{equation}\label{trace}
\n{f_{\,\,|\Sigma}}_{L^2\p{\Sigma\,;\,d\omega}}
\le
C \n{f}_{H^{s}(\R^n)},
\end{equation}
where $H^{s}$ is the Sobolev space over $L^{2}$ and $d\omega$ is
the induced surface measure on $\Sigma$.
It is also known that \eqref{trace} fails for $s=1/2$. 
The purpose of this note is to show how \eqref{trace} can be
modified to still hold for $s=1/2$. Moreover, if the origin belongs to the
set bounded by $\Sigma$, we investigate the dependence
of the constant $C$ in \eqref{trace} on a parameter $\rho\to\infty$
when we replace $(\Sigma; d\omega)$ by its dilation 
$(\rho\Sigma; \rho^{n-1} d\omega)$.
We also find best constants for some instances of \eqref{trace}.

We give a simple proof of such results by deriving 
estimates for different traces from the global smoothing estimates
for dispersive equations that have been established by the authors in \cite{RS3}, \cite{RS4}
by the geometric analysis using the methods of canonical transforms and
comparison principles developed  for the smoothing estimates.
We note that usually the argument is converse and one derives both
the smoothing estimates and the limiting absorption principle from
the appropriate trace theorem (see e.g. Ben-Artzi and Klainerman \cite{BK}). 
However, in this
instance, we show how new arguments and methods in PDEs can be 
applied to deduce facts about traces.

Let us formulate our results. 
Let
$a\in C^\infty\p{\R^n\setminus0}$ be real-valued and satisfy
$a(\xi)>0$ for all $\xi$. We will be restricting to the level set of the function $a$
defined by $\Sigma_a=\b{\xi\in\Rn\backslash 0: a(\xi)=1}$.
In order to simplify the exposition, let us modify the function $a$ outside
the set $\Sigma_{a}$ so that $a$ becomes positively homogeneous
of order two, namely, we can assume that $a$ is already positively
homogeneous satisfying
$a(\lambda \xi)=\lambda^2 a(\xi)$ for $\lambda>0$
and $\xi\neq0$.
The {dual hypersurface} $\Sigma^*_a$ is defined by
$\Sigma^*_a=\b{\nabla a(\xi):\xi\in\Sigma_a}$,
and the dual function $a^*(\xi)$ can be determined by the 
relation $\Sigma_{a^*}=\Sigma^*_a$. We discuss some of
its properties in Section \ref{S2}.

As usual, we denote by
$H^s(\Rn)$ and $\Dot{H}^s(\Rn)$ the Sobolev spaces
with the norms 
$\n{g}_{H^s(\Rn)}=\n{\jp{D_x}^sg}_{L^2(\Rn)}$ and
$\n{g}_{\Dot{H}^s(\Rn)}=\n{|D_x|^sg}_{L^2(\Rn)}$, respectively.
We use the notation $D_{x}=\frac{1}{i}\nabla_{x}$, so that
$|D_{x}|=\sqrt{-\Delta}$, $\jp{D_{x}}=\sqrt{1-\Delta},$ and for
a function $c(\xi)$ we denote by $c(D_{x})$ the Fourier multiplier
$c(D_{x})={\mathcal F}^{-1} c(\xi) {\mathcal F}$,
where ${\mathcal F}$ denotes the Fourier transformation
and ${\mathcal F}^{-1}$ its inverse defined by
\[
{\mathcal F}f(\xi)=\widehat{f}(\xi)=\int_{\Rn} e^{-ix\cdot\xi}f(x)\,dx,\quad
{\mathcal F}^{-1}f(x)={\widehat{f}}^*(x)=\frac1{(2\pi)^n}\int_{\Rn} e^{ix\cdot\xi}f(\xi)\,d\xi.
\]
\begin{thm}\label{Th:Fres}
Let
$a(\xi)\in C^\infty\p{\R^n\setminus0}$ be real-valued and satisfy
$a(\xi)>0$ and $a(\lambda \xi)=\lambda^2 a(\xi)$ for $\lambda>0$
and $\xi\neq0$. Let $\Sigma_a=\b{\xi\in\Rn\backslash 0: a(\xi)=1}$.
Suppose $s>1/2$. 
Then we have
\begin{equation}\label{Fres:1}
\n{f_{\,\,|\Sigma_a}}_{L^2\p{\Sigma_a\,;\,d\omega}}
\le
C \n{f}_{H^{s}(\R^n)}.
\end{equation}
Moreover, for $1/2<s<n/2$, we have
\begin{equation}\label{Fres:1.5}
\n{f_{\,\,|\Sigma_a}}_{L^2\p{\Sigma_a\,;\,d\omega}}
\le
C \n{f}_{\Dot{H}^{s}(\R^n)}.
\end{equation}
If we in addition assume that the Gaussian curvature of
$\Sigma_a$ is non-vanishing, then we have also
the critical cases
\begin{equation}\label{Fres:2}
\n{\p{\frac{\nabla a(x)}{|\nabla a(x)|}\wedge\frac {D_x}{|D_x|}}
f_{\,\,|\Sigma_a}}_{L^2\p{\Sigma_a\,;\,d\omega}}
\le
C \n{f}_{\Dot{H}^{1/2}(\R^n)}
\end{equation}
and
\begin{equation}\label{Fres:3}
\n{\p{\frac x{|x|}\wedge\frac {\nabla a^*(D_x)}{|\nabla a^*(D_x)|}}
f_{\,\,|\Sigma_a}}_{L^2\p{\Sigma_a\,;\,d\omega}}
\le
C \n{f}_{\Dot{H}^{1/2}(\R^n)},
\end{equation}
where $a^*(x)$ is the dual function of $a(\xi)$.
\end{thm}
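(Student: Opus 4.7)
The plan is to derive all four inequalities from smoothing-type bounds for the dispersive group $e^{ita(D_{x})}$ via the canonical-transform and comparison-principle framework of \cite{RS3,RS4}, treating the subcritical cases \eqref{Fres:1}--\eqref{Fres:1.5} by classical duality.

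For \eqref{Fres:1} I would cover $\Sigma_{a}$ by finitely many coordinate patches, flatten it locally, and invoke the standard Sobolev trace $H^{s}(\R^{n})\hookrightarrow H^{s-1/2}(\R^{n-1})$ valid for $s>1/2$. For \eqref{Fres:1.5} I would dualize to $\n{g\,d\omega_{\Sigma_{a}}}_{\Dot{H}^{-s}(\R^{n})}\le C\n{g}_{L^{2}(\Sigma_{a})}$; by Parseval this is equivalent to $L^{2}(\Sigma_{a})$-boundedness of the Riesz-potential operator on the compact $(n-1)$-dimensional surface $\Sigma_{a}$ with kernel $|x-y|^{-(n-2s)}$, which holds precisely when $s>1/2$, while the assumption $s<n/2$ is what makes the Fourier multiplier $|\xi|^{-2s}$ locally integrable near the origin in the first place.

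For the critical estimate \eqref{Fres:2} I again dualize. Writing $M=\frac{\nabla a(x)}{|\nabla a(x)|}\wedge\frac{D_{x}}{|D_{x}|}$, the inequality is equivalent to
\[
\n{|D_{x}|^{-1/2}M^{*}\p{g\,d\omega_{\Sigma_{a}}}}_{L^{2}(\R^{n})}\le C\n{g}_{L^{2}(\Sigma_{a})}.
\]
The crucial point is that the symbol of $M$ vanishes when $\xi$ is parallel to $\nabla a(x)$ at $x\in\Sigma_{a}$, i.e.\ in the exact direction in which the Riesz argument above breaks down at the endpoint $s=1/2$. I would identify this dualized inequality as the time-integrated form of the global tangential smoothing estimate for $e^{ita(D_{x})}$ established in \cite{RS4}: the non-vanishing Gaussian curvature of $\Sigma_{a}$ provides the stationary-phase decay of $\widehat{d\omega_{\Sigma_{a}}}$ needed for that estimate, and the tangential projection inside the smoothing estimate matches the wedge factor of $M$. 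The comparison principle of \cite{RS3,RS4} then reduces the general case to the model $a(\xi)=|\xi|^{2}$, where $\Sigma_{a}$ is the unit sphere and the inequality can be verified directly by a spherical-harmonic decomposition in which the angular factor hidden in $M$ supplies the half-derivative lacking in the radial direction.

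Estimate \eqref{Fres:3} is the position-Fourier mirror of \eqref{Fres:2}: using the Legendre-type duality between $a$ and $a^{*}$ from Section \ref{S2}, the canonical transform of \cite{RS3,RS4} exchanges the position normal $\nabla a(x)$ with the Fourier normal $\nabla a^{*}(D_{x})$ and the Fourier direction $D_{x}/|D_{x}|$ with the position direction $x/|x|$, transporting \eqref{Fres:2} into \eqref{Fres:3}. The main obstacle lies entirely in \eqref{Fres:2}: one must check that the wedge projection removes precisely the radial singularity causing the failure of the ordinary trace theorem at $s=1/2$, in a way that is stable under the canonical transform so that the sphere model suffices to close the argument in general.
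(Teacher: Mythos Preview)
Your treatment of the subcritical estimates \eqref{Fres:1} and \eqref{Fres:1.5} by local flattening and by the dual Riesz-potential argument is correct, but it is not the route taken in the paper. There a single mechanism handles all four inequalities: writing $T=e^{ita(D_x)}$, one computes $T^*v={\mathcal F}^{-1}_\xi\bigl[(\FT_{t,x}v)(a(\xi),\xi)\bigr]$, and the change of variables $\xi=\rho\omega$, $\omega\in\Sigma_a$, shows that the smoothing bound $\|Ae^{ita(D_x)}\varphi\|_{L^2_{t,x}}\le C_0\|\varphi\|_{L^2}$ is \emph{equivalent}, with the same constant, to
\[
\n{\widehat{A^*f}_{\,|\rho\Sigma_a}}_{L^2(\rho\Sigma_a;\,2\rho^{n-1}d\omega/|\nabla a|)}\le \pi^{-1/2}C_0\sqrt{\rho}\,\|\widehat f\|_{L^2}\qquad(\rho>0).
\]
Feeding in $A_1=\jp{x}^{-s}|D_x|^{1/2}$ and $A_2=|x|^{\alpha-1}|D_x|^{\alpha}$ from \eqref{EQ:BK} and \eqref{EQ:critical-est1} and relabelling $\widehat f\mapsto f$ gives \eqref{Fres:1} and \eqref{Fres:1.5}. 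What your classical arguments forfeit is exactly this equality of constants, which is what the paper exploits for Theorems \ref{cor:bc}--\ref{cor:bc2}.

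For the critical cases your proposal has a real gap. After dualising \eqref{Fres:2} you assert that $\||D_x|^{-1/2}M^*(g\,d\omega_{\Sigma_a})\|_{L^2}\le C\|g\|_{L^2(\Sigma_a)}$ can be ``identified as the time-integrated form of the global tangential smoothing estimate'', but you do not say what this identification is, and the ingredients you list (stationary-phase decay of $\widehat{d\omega_{\Sigma_a}}$, a spherical-harmonic check for $a(\xi)=|\xi|^2$) do not assemble into a proof. The missing idea is precisely the $T^*$-equivalence above, whose key feature is that passing to $\widehat{A^*f}$ \emph{swaps} the $x$- and $\xi$-roles in the symbol. Thus the operator $\Omega_1=|x|^{-1/2}\bigl(\tfrac{x}{|x|}\wedge\tfrac{\nabla a(D_x)}{|\nabla a(D_x)|}\bigr)|D_x|^{1/2}$, whose symbol vanishes on $\Gamma_a=\{(\lambda\nabla a(\xi),\xi)\}$---the structure condition \eqref{EQ:gamma-a} needed for the smoothing estimate \eqref{math-ann}---is carried on the trace side to $\tfrac{\nabla a(x)}{|\nabla a(x)|}\wedge\tfrac{D_x}{|D_x|}$, whose symbol vanishes when $\xi\parallel\nabla a(x)$. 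These are dual conditions, not the same one, and it is the Fourier swap, not a direct matching of tangential projections, that closes the argument. Estimate \eqref{Fres:3} is obtained in the paper by the same device applied to $\Omega_2$ in \eqref{EQ:LB-dual}, rather than by transporting \eqref{Fres:2} through a Legendre duality as you suggest.
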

Here the outer product $p\wedge q$ of vectors
$p=(p_1,p_2,\ldots,p_n)$ and $q=(q_1,q_2,\ldots,q_n)$
is defined by $p\wedge q=(p_iq_j-p_jq_i)_{i<j}$.
The condition that the Gaussian curvature of $\Sigma_{a}$ is 
non-vanishing can be expressed as $\det\nabla^{2}a(\xi)\not=0$ for
all $\xi\not=0.$

The third and fourth
estimates \eqref{Fres:2} and \eqref{Fres:3} in Theorem \ref{Th:Fres} say that
we can attain the critical order $s=1/2$ in the first and second estimates
\eqref{Fres:1} and \eqref{Fres:1.5} under a certain structure condition. 
The operators appearing in \eqref{Fres:2} and \eqref{Fres:3} are related to the
Laplace-Beltrami operator on $\Sigma_{a}$, but are of order
zero in both $x$ and $\xi$.
The precise geometric
meaning of the structure \eqref{Fres:2} and \eqref{Fres:3} will be related to the
Hamiltonian flow of the evolution governed by the defining function $a(\xi)$.

We also get the global version of the estimates, so that Theorem
\ref{Th:Fres} follows by setting $\rho=1$
from the following:

\begin{thm}\label{Th:Fres2}
Assume conditions of Theorem \ref{Th:Fres}. Then we have 
the uniform trace estimates
\begin{equation}\label{EQ:nonhom}
\n{f_{|\rho\Sigma_a}}_{L^2(\rho\Sigma_a;\rho^{n-1}
d\omega)} \leq C \n{f}_{H^s(\Rn)}
\qquad (s>1/2),
\end{equation}
and
\begin{equation}\label{EQ:gl}
\n{f_{|\rho\Sigma_a}}_{L^2(\rho\Sigma_a;\rho^{n-1}
d\omega)} \leq C \rho^{s-1/2} \n{f}_{\Dot{H}^s(\Rn)}
\qquad (n/2>s>1/2),
\end{equation}
for any $\rho>0$.
Moreover, in the critical cases, we have
\begin{align}\label{critical}
&\n{\p{\frac{\nabla a(x)}{|\nabla a(x)|}\wedge\frac {D_x}{|D_x|}}
f_{|\rho\Sigma_a}}_{L^2(\rho\Sigma_a;\rho^{n-1}
d\omega)}  \leq
C \n{f}_{\Dot{H}^{1/2}(\Rn)},
\\
\label{critical2}
&\n{\p{\frac x{|x|}\wedge\frac {\nabla a^*(D_x)}{|\nabla a^*(D_x)|}}
f_{|\rho\Sigma_a}}_{L^2(\rho\Sigma_a;\rho^{n-1}
d\omega)}  \leq
C \n{f}_{\Dot{H}^{1/2}(\Rn)},
\end{align}
with constants independent of $\rho>0$.
\end{thm}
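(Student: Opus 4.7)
The plan is to derive all four estimates in Theorem~\ref{Th:Fres2} from Theorem~\ref{Th:Fres} by a single dilation. For $\rho>0$, set $g(y):=f(\rho y)$. A change of variables on the surface yields
\[
\n{f_{|\rho\Sigma_a}}^2_{L^2(\rho\Sigma_a;\,\rho^{n-1}d\omega)}=\rho^{n-1}\,\n{g_{|\Sigma_a}}^2_{L^2(\Sigma_a;\,d\omega)},
\]
while $\widehat g(\xi)=\rho^{-n}\widehat f(\xi/\rho)$ gives the homogeneous Sobolev scaling $\n{g}^2_{\Dot{H}^s}=\rho^{2s-n}\n{f}^2_{\Dot{H}^s}$. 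Feeding these two identities into \eqref{Fres:1.5} applied to $g$ produces \eqref{EQ:gl} with exactly the factor $\rho^{s-1/2}$ on the right.

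For the critical estimates \eqref{critical} and \eqref{critical2}, the key structural observation is that the symbols $\nabla a(x)/|\nabla a(x)|$, $x/|x|$, $\xi/|\xi|$ and $\nabla a^*(\xi)/|\nabla a^*(\xi)|$ are all positively homogeneous of degree zero, since $a$ (and hence $a^*$) is two-homogeneous. The zero-order operators appearing in \eqref{Fres:2}--\eqref{Fres:3} therefore commute with the dilation $f\mapsto f(\rho\cdot)$: writing $P$ for the operator in \eqref{critical}, a short Fourier computation using the $0$-homogeneity of $\nabla a/|\nabla a|$ in $x$ and of $\xi/|\xi|$ in $\xi$ gives $P(f(\rho\cdot))(y)=(Pf)(\rho y)$. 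Applying \eqref{Fres:2} to $g$ in place of $f$, the $\rho^{n-1}$ factor from the surface measure and the $\rho^{1-n}$ factor from $\n{g}^2_{\Dot{H}^{1/2}}=\rho^{1-n}\n{f}^2_{\Dot{H}^{1/2}}$ cancel precisely, delivering \eqref{critical} with a constant independent of $\rho$. The identical argument applied to \eqref{Fres:3} yields \eqref{critical2}.

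The inhomogeneous bound \eqref{EQ:nonhom} splits into two ranges. For $\rho\le 1$ it is immediate from \eqref{EQ:gl}, since then $\rho^{s-1/2}\le 1$ and $\n{f}_{\Dot{H}^s}\le\n{f}_{H^s}$. For $\rho\ge 1$ pure scaling is no longer uniform in $\rho$, and here I would localize: cover $\rho\Sigma_a$ by a bounded-overlap family of unit balls $\{B(x_j,1)\}$, parametrize $\rho\Sigma_a\cap B(x_j,2)$ as a graph whose defining function has derivatives bounded uniformly in $j$ and $\rho$ (the principal curvatures of $\rho\Sigma_a$ equal $\rho^{-1}$ times those of $\Sigma_a$, so $\rho\Sigma_a$ becomes \emph{flatter} as $\rho$ grows), apply the flat-graph $H^s$ trace theorem on each piece with constant independent of $j$ and $\rho$, and sum. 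The main obstacle in the whole argument is therefore this large-$\rho$ covering step for \eqref{EQ:nonhom}; the $\rho$-cancellation in the critical cases, while structurally central, is essentially bookkeeping once the $0$-homogeneity of the multiplier symbols is noted.
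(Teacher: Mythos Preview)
Your scaling reduction for the three homogeneous estimates \eqref{EQ:gl}, \eqref{critical}, \eqref{critical2} is correct, and in fact the paper states exactly this equivalence in the paragraph immediately following Theorem~\ref{Th:Fres2}. The difficulty is that in this paper Theorem~\ref{Th:Fres} is \emph{not} proved independently: the text says explicitly that Theorem~\ref{Th:Fres} follows from Theorem~\ref{Th:Fres2} by setting $\rho=1$. So for the critical estimates your argument is circular: you are invoking \eqref{Fres:2} and \eqref{Fres:3}, whose only proof in the paper is via \eqref{critical} and \eqref{critical2}. The paper's actual proof goes in the opposite direction and is of a completely different nature: it shows that any smoothing estimate $\|A e^{ita(D_x)}\varphi\|_{L^2(\R_t\times\R^n_x)}\le C_0\|\varphi\|_{L^2}$ is \emph{equivalent}, by computing the adjoint $T^*$ of the propagator and using Plancherel in the $\rho$-variable, to a Fourier restriction bound $\|\widehat{A^*f}_{|\rho\Sigma_a}\|_{L^2(\rho\Sigma_a;\rho^{n-1}d\omega/|\nabla a|)}\le C\sqrt{\rho}\,\|f\|_{L^2}$ uniformly in $\rho$. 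The four estimates then drop out by choosing $A=\jp{x}^{-s}|D_x|^{1/2}$, $A=|x|^{\alpha-1}|D_x|^{\alpha}$, and the structured operators $\Omega_1,\Omega_2$ of \eqref{eqex1}--\eqref{EQ:LB-dual}, for which the required smoothing bounds are the known estimates \eqref{EQ:BK}, \eqref{EQ:critical-est1}, and \eqref{math-ann} from \cite{RS3,RS4}. The whole point of the paper is this reverse implication from dispersive smoothing to traces; your argument bypasses it entirely.

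For the non-critical parts your approach is not circular, since \eqref{Fres:1} and \eqref{Fres:1.5} are classical, and there your route genuinely differs from the paper's. Your treatment of \eqref{EQ:nonhom} for large $\rho$ via a unit-scale covering of $\rho\Sigma_a$ is plausible and more elementary than the paper's route through \eqref{EQ:BK}, but as written it is only a sketch: you still need to control the sum $\sum_j\|\chi_j f\|_{H^s}^2$ by $\|f\|_{H^s}^2$ with a constant independent of $\rho$ (this requires uniform bounds on the commutators $[\jp{D}^s,\chi_j]$, or an almost-orthogonality argument), and to make precise the uniform graph bounds. The paper avoids all of this by getting the $\rho$-uniformity for free from the time integral in the smoothing estimate.
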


We note that estimates \eqref{Fres:1.5}, \eqref{Fres:2} and \eqref{Fres:3}
are equivalent to estimates
\eqref{EQ:gl}, \eqref{critical} and \eqref{critical2} with the same constants
$C$, respectively.
Indeed, \eqref{EQ:gl}, \eqref{critical} and \eqref{critical2} follow
from estimates \eqref{Fres:1.5}, \eqref{Fres:2} and \eqref{Fres:3}
by just setting $f=f_\rho$, where the notation $g_\rho$ denotes the dilation
$g_\rho(x)=g(\rho x)$.
The converse is trivial.

Furthermore, we note that we get Theorem \ref{Th:Fres2} with
the critical case from
the following generalised result:

\begin{thm}\label{cor:sigma}
Let
$a(\xi)\in C^\infty\p{\R^n\setminus0}$ be real-valued and satisfy
$a(\xi)>0$ and $a(\lambda \xi)=\lambda^2 a(\xi)$ for $\lambda>0$
and $\xi\neq0$. Assume that the Gaussian curvature of the set 
$\Sigma_a=\b{\xi\in\Rn\backslash 0: a(\xi)=1}$ never vanishes.
Let a pseudo-differential operator $\sigma(X,D)$
have symbol $\sigma(x,\xi)$ which is  smooth in $x\neq0$, $\xi\neq0$,
and which is 
positively homogeneous of order $\beta$ in $x$ and of order $\alpha$ in $\xi$,
i.e. $\sigma(\lambda x,\mu\xi)=\lambda^{\beta}\mu^{\alpha}\sigma(x,\xi)$ for
all $x\not=0, \xi\not=0, \lambda>0,\mu>0.$
Suppose also the structure condition
\begin{equation}\label{EQ:gamma-atau2}
\begin{aligned}
 & \sigma(x,\lambda\nabla a(x))=0\quad \text{for all}\quad x\neq0
 \textrm{ and } \lambda\in\R,
\\
\text{or}\quad&
\\ 
 & \sigma(-x,\lambda\nabla a(x))=0\quad \text{for all}\quad x\neq0
 \textrm{ and } \lambda\in\R.
\end{aligned}
\end{equation}
Then we have 
the estimate
\begin{equation}\label{EQ:tau-2}
\n{\p{\sigma(X,D)
f}_{|\rho\Sigma_a}}_{L^2(\rho\Sigma_a;\rho^{n-1}
d\omega)} 
\leq
C \rho^{\beta}\n{f}_{\Dot{H}^{1/2+\alpha}(\Rn)},
\end{equation}
with a constant $C$ independent of $\rho>0.$
In particular, if $\alpha=\beta=0$,
we obtain the uniform critical estimate
\begin{equation}\label{EQ:tau-3}
\n{\p{\sigma(X,D)
f}_{|\rho\Sigma_a}}_{L^2(\rho\Sigma_a;\rho^{n-1}
d\omega)}  
 \leq
C \n{f}_{\Dot{H}^{1/2}(\Rn)}.
\end{equation}
\end{thm}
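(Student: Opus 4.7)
My plan is to reduce the trace inequality \eqref{EQ:tau-2} to a space--time smoothing estimate for the propagator $e^{ita(D)}$ of the type established in \cite{RS3,RS4}; the symbolic hypothesis \eqref{EQ:gamma-atau2} is precisely the Hamiltonian-flow vanishing condition those estimates require. The first step is to scale out the dilation parameter. Setting $g(x)=f(\rho x)$, the joint homogeneity of $\sigma$ in $x$ and $\xi$ yields $\sigma(X,D)g(x)=\rho^{\alpha-\beta}\bigl(\sigma(X,D)f\bigr)(\rho x)$; combined with the dilation identity $\n{g}_{\Dot H^{s}}=\rho^{s-n/2}\n{f}_{\Dot H^{s}}$ and with the Jacobian $\rho^{n-1}$ relating $\rho^{n-1}d\omega$ on $\rho\Sigma_a$ to the pullback of $d\omega$ on $\Sigma_a$, all powers of $\rho$ balance, so that \eqref{EQ:tau-2} is equivalent to the single $\rho=1$ inequality
\begin{equation}\label{EQ:plan1}
\n{(\sigma(X,D)f)_{|\Sigma_a}}_{L^2(\Sigma_a;d\omega)}
\le C\,\n{f}_{\Dot H^{1/2+\alpha}(\R^n)}.
\end{equation}

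Next I would rewrite \eqref{EQ:plan1} as the Fourier dual of a smoothing estimate. Applying Plancherel in $t$ to $\sigma(X,D)e^{ita(D)}h$ and using $\mathcal F_{t}[e^{ita(D)}](\tau)=2\pi\,\delta(a(D)-\tau)$ gives
\[
\n{\sigma(X,D)e^{ita(D)}h}_{L^{2}(\R_t\times\R^n_x)}^{2}
=\frac{1}{2\pi}\int_{0}^{\infty}
\n{\sigma(X,D)\,\delta(a(D)-\tau)\,h}_{L^{2}(\R^n_x)}^{2}\,d\tau.
\]
By the coarea formula, $\delta(a(D)-\tau)$ is the Fourier extension from the level set $\sqrt\tau\,\Sigma_a$; a $TT^{*}$ argument then identifies each $\tau$--fibre on the right with the $L^{2}$ trace norm of $\sigma(X,D)h$ on $\sqrt\tau\,\Sigma_a$ in physical space, and a parallel coarea decomposition of the homogeneous Sobolev norm shows that \eqref{EQ:plan1} is equivalent to the $\tau$--integrated smoothing inequality
\begin{equation}\label{EQ:plan2}
\n{\sigma(X,D)e^{ita(D)}h}_{L^{2}(\R_t\times\R^n_x)}
\le C\,\n{h}_{\Dot H^{r}(\R^n)}
\end{equation}
for the index $r$ forced by the scaling of $\sigma$.

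Finally, estimate \eqref{EQ:plan2} is exactly of the type proved in \cite{RS3,RS4} by the canonical-transform and comparison-principle machinery. The bicharacteristics of $a(\xi)$ are the straight lines $t\mapsto(x_0+t\nabla a(\xi_0),\xi_0)$, so the first alternative of \eqref{EQ:gamma-atau2}, together with the non-vanishing Gaussian curvature of $\Sigma_a$, yields the integrability of $|\sigma|^{2}$ along each such line that those methods demand; the second alternative of \eqref{EQ:gamma-atau2} is handled identically after the time reversal $t\mapsto-t$. The main difficulty I foresee lies in Step~2: because the critical exponent $1/2$ admits no loss, the coarea identification of the trace norm on $\sqrt\tau\,\Sigma_a$ with the $\tau$--sliced Fourier extension must be carried out with the precise powers of $\tau$ and $|\nabla a(\xi)|$, and it is the non-vanishing curvature that renders this endpoint accessible once the structure condition has killed the normal component responsible for the failure of the trace at $s=1/2$ in general.
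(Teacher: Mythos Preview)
Your Step~1 is correct and matches the paper's remark that \eqref{EQ:tau-2} is equivalent to its $\rho=1$ case by dilation.

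The gap is in Steps~2--3, where you identify the hypothesis \eqref{EQ:gamma-atau2} with the Hamiltonian-flow vanishing condition of \cite{RS3}. These are \emph{not} the same condition: the smoothing estimate \eqref{math-ann} requires vanishing on $\Gamma_a=\{(\lambda\nabla a(\xi),\xi)\}$, i.e.\ the condition \eqref{EQ:gamma-a} reads $\sigma(\lambda\nabla a(\xi),\xi)=0$, whereas the first line of \eqref{EQ:gamma-atau2} reads $\sigma(x,\lambda\nabla a(x))=0$. The roles of the $x$- and $\xi$-slots are interchanged. Correspondingly, the duality underlying your Step~2 (which is the equivalence \eqref{Smest}$\Leftrightarrow$\eqref{est:restrict} in the paper) converts a smoothing bound for an operator $A$ into a bound on $\widehat{A^{*}f}$ restricted to $\rho\Sigma_a$ \emph{in Fourier space}; it does not by itself produce a trace of $\sigma(X,D)f$ on $\rho\Sigma_a$ in physical space. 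So applying \cite{RS3} directly to $\sigma(X,D)$ both requires the wrong structure condition and yields the wrong kind of restriction.

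The paper's proof repairs precisely this point. One introduces the swapped symbol $\check\sigma(x,\xi)=\sigma(\xi,x)$ and applies the smoothing machinery to $A(X,D)=|X|^{-1/2-\alpha}\check\sigma(X,D)|D_x|^{1/2-\beta}$: now \eqref{EQ:gamma-atau2} becomes exactly $A(x,\xi)=0$ on $\Gamma_a$, and $A$ has the required homogeneities $(-1/2,1/2)$ for \eqref{math-ann}. The resulting Fourier-space restriction for $\widehat{A^{*}f}$ is then converted into the desired physical-space trace via the identity
\[
\mathcal F\bigl(\check\sigma(X,D)^{*}\,\mathcal F^{-1}\overline g\bigr)(x)=\overline{(\sigma(X,D)g)(x)},
\]
which has to be verified by hand. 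Without this swap-and-Fourier-conjugation step your $TT^{*}$ identification cannot land on $(\sigma(X,D)f)_{|\Sigma_a}$.

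A smaller point: the second alternative in \eqref{EQ:gamma-atau2} is not reduced to the first by time reversal $t\mapsto -t$ (on $\Gamma_a$ this only changes the sign of $\lambda$, so the vanishing set is unchanged). The paper instead uses the conjugation identity $(\sigma(X,D)f)(x)=\overline{(\overline\sigma(-X,D)\,\overline{f}(-\cdot))(-x)}$ to pass from the second alternative to the first.
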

Again we remark that estimates \eqref{EQ:tau-2} and \eqref{EQ:tau-3} are
equivalent to themselves with $\rho=1$, with the same constant $C$,
respectively.
And we also remark that two types of conditions in
\eqref{EQ:gamma-atau2} which have different sign coincide with each other
if $a(\xi)$ is not only positively
homogeneous but homogeneous.

In the next section we explore the geometric meaning of operators in 
\eqref{Fres:2} and \eqref{Fres:3} together with that of the structure
condition \eqref{EQ:gamma-atau2}
in more detail and show how these estimates
can be deduced from the smoothing estimates for suitable
evolution equations. For different results on smoothing estimates
we refer to the first papers
\cite{Ka2, BD2, CS, Sj, V}, and to the authors' paper \cite{RS4} for the
setting of evolution equations corresponding to this paper.

Finally, we remark that we can use the argument of this paper to also
conclude some best constants.
We give the statement corresponding
to the case of the sphere, $a(\xi)=|\xi|^{2}$, in \eqref{Fres:1.5}.
\begin{thm}\label{cor:bc}
Let $n/2>s>1/2$. Then we have the estimate
\begin{equation}\label{EQ:glbc}
\n{f_{|\mathbb S^{n-1}}}_{L^2(\mathbb S^{n-1};
d\omega)} \leq 
\bigg(2^{1-2s} \frac{\Gamma(2s-1) \Gamma(\frac{n}{2} -s)}{\Gamma(s)^2 \Gamma(\frac{n}{2} - 1+s)} \bigg)^{1/2}
\n{f}_{\Dot{H}^s(\Rn)},
\end{equation}
and the constant in estimate
\eqref{EQ:glbc} can not be improved.
\end{thm}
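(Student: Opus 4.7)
The plan is to decompose $\hat f$ into spherical harmonics and reduce \eqref{EQ:glbc} to a family of one-dimensional Bessel inequalities that are simultaneously sharp. Expand $\hat f(\xi)=\sum_k a_k(|\xi|)Y_k(\xi/|\xi|)$ in an orthonormal basis $\{Y_k\}$ of $L^2(\mathbb{S}^{n-1})$ of spherical harmonics of degrees $d_k$. Hecke's plane-wave expansion (equivalently Bochner's formula for the Fourier transform of $a(|\xi|)Y(\xi/|\xi|)$) gives, for $|\omega|=1$,
\[
 f(\omega)=(2\pi)^{-n/2}\sum_k i^{d_k}A_k\,Y_k(\omega),\qquad
 A_k:=\int_0^\infty a_k(r)\,J_{\frac{n}{2}-1+d_k}(r)\,r^{n/2}\,dr.
\]
Orthonormality on $\mathbb{S}^{n-1}$ together with polar coordinates on $\Rn$ yield $\n{f_{|\mathbb{S}^{n-1}}}_{L^2(\mathbb{S}^{n-1})}^2=(2\pi)^{-n}\sum_k|A_k|^2$ and $\n{f}_{\Dot{H}^s(\Rn)}^2=(2\pi)^{-n}\sum_k\int_0^\infty |a_k(r)|^2 r^{2s+n-1}\,dr$, so \eqref{EQ:glbc} becomes the uniform-in-$k$ inequality $|A_k|^2\le C^2\int_0^\infty|a_k|^2 r^{2s+n-1}\,dr$.

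Second, split $r^{n/2}=r^{(2s+n-1)/2}\cdot r^{(1-2s)/2}$ and apply Cauchy--Schwarz inside $A_k$:
\[
 |A_k|^2\le I_{d_k}\cdot\int_0^\infty |a_k(r)|^2 r^{2s+n-1}\,dr,\qquad
 I_m:=\int_0^\infty J_{\frac{n}{2}-1+m}(r)^2\,r^{1-2s}\,dr.
\]
The Weber--Schafheitlin integral $I_m$ converges precisely when $0<2s-1<n-1+2m$, which holds for all $m\ge 0$ in the range $1/2<s<n/2$, and evaluates to
\[
 I_m=\frac{\Gamma(2s-1)\,\Gamma(\tfrac{n}{2}+m-s)}{2^{2s-1}\,\Gamma(s)^2\,\Gamma(\tfrac{n}{2}-1+m+s)}.
\]
Hence the optimal $C^2$ is $\sup_{m\ge 0}I_m$.

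Third, identify the maximiser. Using $\Gamma(x+m)/\Gamma(x)=\prod_{j=0}^{m-1}(x+j)$,
\[
 \frac{I_m}{I_0}=\prod_{j=0}^{m-1}\frac{\tfrac{n}{2}-s+j}{\tfrac{n}{2}-1+s+j},
\]
and each factor is $\le 1$ precisely because $s\ge 1/2$. Therefore $\sup_{m\ge 0}I_m=I_0$, which coincides with the constant $2^{1-2s}\Gamma(2s-1)\Gamma(\tfrac{n}{2}-s)/\bigl(\Gamma(s)^2\Gamma(\tfrac{n}{2}-1+s)\bigr)$ in \eqref{EQ:glbc}.

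For sharpness choose $\hat f(\xi)=J_{\frac{n}{2}-1}(|\xi|)\,|\xi|^{1-2s-n/2}$, i.e.\ put all the mass in the degree-zero component and arrange $a_0(r)\,r^{(2s+n-1)/2}\propto J_{\frac{n}{2}-1}(r)\,r^{(1-2s)/2}$, so that Cauchy--Schwarz becomes an equality; then both sides of \eqref{EQ:glbc} evaluate to the same multiple of $I_0$. The main obstacle is the classical evaluation of the Weber--Schafheitlin integral $I_m$ in the permitted range of $s$ and the verification that this extremizer genuinely defines an element of $\Dot{H}^s(\Rn)$ with the expected trace on $\mathbb{S}^{n-1}$; both reduce to checking convergence of weighted Bessel integrals in the range $1/2<s<n/2$.
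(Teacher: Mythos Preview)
Your argument is correct and complete: the spherical-harmonic reduction, the Cauchy--Schwarz step, the Weber--Schafheitlin evaluation of $I_m$, the monotonicity $I_m\le I_0$ for $s\ge 1/2$, and the extremiser $\hat f(\xi)=J_{n/2-1}(|\xi|)\,|\xi|^{1-2s-n/2}$ all check out in the range $1/2<s<n/2$.

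However, your route is genuinely different from the paper's. The paper does not compute anything directly for the trace inequality. Instead it invokes the equivalence, established in the proof of Theorem~\ref{Th:Fres2}, between the smoothing estimate $\n{|x|^{\alpha-1}|D_x|^{\alpha}e^{-it\Delta}\varphi}_{L^2_{t,x}}\le C_0\n{\varphi}_{L^2}$ and the trace estimate on $\rho\mathbb S^{n-1}$, noting that the equivalence preserves the best constant (up to an explicit factor $\sqrt{\pi}$). The optimal $C_0$ for the smoothing estimate is then simply quoted from \cite{BS}. Thus the paper's proof is a two-line transfer argument resting on external input, whereas you carry out the underlying Bessel computation explicitly. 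The content is ultimately the same Weber--Schafheitlin integral and the same monotonicity in the spherical-harmonic degree --- this is essentially what \cite{BS} (and, in the general radial form, Walther \cite{Wa2}) do on the smoothing side --- but your version is self-contained and bypasses the PDE layer entirely. The paper's approach, by contrast, illustrates its main theme: that trace theorems and dispersive smoothing estimates are two faces of the same inequality, with constants transferring exactly.
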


We can also conclude a more general
result with a possibility to put different derivatives and weights.

\begin{thm}\label{cor:bc2}
Let $\sigma:(0,\infty)\to\R$ and $w:(0,\infty)\to\R$ be functions such that
the following expression is finite:
\begin{equation}\label{EQ:C0}
C_{1}=
\p{\mathop{\sup_{t>0}}_{k\in\N}
\b{\frac1{\sigma(t)^2}
\int_0^\infty J_{\nu(k)}(rt)^2 \frac{r}{w(r)^{2}}dr}}^{1/2}<\infty,
\end{equation}
where for $\lambda>-1/2$ the Bessel function 
$J_\lambda$ of order
$\lambda$ is given by
\begin{equation}\label{EQ:BF}
J_\lambda(t)=\frac{t^\lambda}{2^\lambda
\Gamma(\lambda+1/2) \Gamma(1/2)} \int_{-1}^1
e^{it r} (1-r^2)^{\lambda-1/2} dr,
\end{equation}
and $\nu(k)=n/2+k-1$.
Then we have the estimate
\begin{equation}\label{EQ:glbc2}
\n{f_{|\rho\mathbb S^{n-1}}}_{L^2(\rho\mathbb S^{n-1};\rho^{n-1}
d\omega)} \leq 
C_{1} \sqrt{\rho}\,\sigma(\rho)\n{w(|D_x|)f}_{L^2(\Rn)},
\end{equation}
for all $\rho>0$,
and the constant in estimate
\eqref{EQ:glbc2} can not be improved.
\end{thm}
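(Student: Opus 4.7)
The plan is to reduce the estimate, one spherical-harmonic mode at a time, to a weighted Cauchy--Schwarz inequality whose sharp constant is precisely $C_{1}$. Let $\{Y_{k,l}\}$ be an $L^{2}(\Sph^{n-1})$-orthonormal basis of spherical harmonics, with $k$ the degree and $l$ indexing a basis of degree-$k$ harmonics, and expand $f(r\theta)=\sum_{k,l}a_{k,l}(r)Y_{k,l}(\theta)$. Orthogonality on $\rho\Sph^{n-1}$ gives
\[
\n{f_{|\rho\Sph^{n-1}}}_{L^{2}(\rho\Sph^{n-1};\rho^{n-1}d\omega)}^{2}=\rho^{n-1}\sum_{k,l}\abs{a_{k,l}(\rho)}^{2}.
\]
By the Hecke--Bochner identity, $\widehat{a_{k,l}Y_{k,l}}(\xi)=(2\pi)^{n/2}(-i)^{k}\psi_{k,l}(\abs{\xi})Y_{k,l}(\xi/\abs{\xi})$ where
\[
\psi_{k,l}(s)=s^{-(n/2-1)}\int_{0}^{\infty}a_{k,l}(r)J_{\nu(k)}(rs)r^{n/2}\,dr,
\]
and this correspondence is unitary on $L^{2}(r^{n-1}\,dr)$. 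Plancherel and orthogonality then yield
\[
\n{w(\abs{D_{x}})f}_{L^{2}(\Rn)}^{2}=\sum_{k,l}\int_{0}^{\infty}w(s)^{2}\abs{\psi_{k,l}(s)}^{2}s^{n-1}\,ds.
\]

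Next I would invert the Hecke--Bochner map to write
\[
a_{k,l}(\rho)=\rho^{-(n/2-1)}\int_{0}^{\infty}\psi_{k,l}(s)J_{\nu(k)}(\rho s)s^{n/2}\,ds,
\]
split $s^{n/2}=[w(s)s^{(n-1)/2}]\cdot[s^{1/2}/w(s)]$, and apply Cauchy--Schwarz:
\[
\abs{a_{k,l}(\rho)}^{2}\le\rho^{2-n}\p{\int_{0}^{\infty}w(s)^{2}\abs{\psi_{k,l}(s)}^{2}s^{n-1}\,ds}\p{\int_{0}^{\infty}J_{\nu(k)}(\rho s)^{2}\frac{s}{w(s)^{2}}\,ds}.
\]
By the definition \eqref{EQ:C0}, the second factor is at most $C_{1}^{2}\sigma(\rho)^{2}$ uniformly in $k$. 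Summing over $(k,l)$ and multiplying by $\rho^{n-1}$ produces \eqref{EQ:glbc2}; the prefactor $\sqrt{\rho}$ arises because $\rho^{n-1}\cdot\rho^{2-n}=\rho$.

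For the sharpness claim, given $\varepsilon>0$ I would pick $k_{0}\in\N$ and $t>0$ with $\sigma(t)^{-2}\int_{0}^{\infty}J_{\nu(k_{0})}(rt)^{2}r/w(r)^{2}\,dr>(C_{1}-\varepsilon)^{2}$, and then choose $f=a(r)Y_{k_{0},1}(\theta)$ concentrated in a single mode whose Hecke--Bochner image $\psi$ is a smoothly truncated multiple of $J_{\nu(k_{0})}(ts)\cdot s^{-(n/2-1)}w(s)^{-2}$. Evaluated at $\rho=t$ the Cauchy--Schwarz step becomes near-equality, and the ratio of the two sides of \eqref{EQ:glbc2} tends to $C_{1}$ as the truncation is removed. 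The main obstacle is precisely this last step: one has to verify that the formal extremiser can be truncated so that $f$ lies in the admissible class, with $w(\abs{D_{x}})f\in L^{2}(\Rn)$ and a well-defined restriction to $\rho\Sph^{n-1}$, and that the Cauchy--Schwarz loss caused by the truncation vanishes in the limit. The integrability of $J_{\nu(k_{0})}(ts)^{2}s/w(s)^{2}$ on $(0,\infty)$, which is a consequence of the hypothesis $C_{1}<\infty$, is exactly what makes such an approximate extremiser available.
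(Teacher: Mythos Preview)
Your argument is correct and complete (modulo the standard approximation at the end, which you honestly flag and whose feasibility you correctly identify as a consequence of $C_1<\infty$). However, it follows a genuinely different route from the paper.

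The paper does not attack the trace inequality directly. Instead it invokes the equivalence, established earlier in the proof of Theorem~\ref{Th:Fres2}, between the smoothing estimate
\[
\n{A e^{ita(D_x)}\varphi}_{L^2(\R_t\times\R^n_x)}\le C_0\n{\varphi}_{L^2(\R^n_x)}
\]
and the trace estimate~\eqref{est:restrict}, with the \emph{same} constant $C_0$ on both sides. Taking $a(\xi)=|\xi|^2$ and $A=w(|x|)^{-1}\sigma(|D_x|)^{-1}$, the paper then simply quotes Walther's formula for the best constant in the radial smoothing estimate, which (with $g(\rho)=\rho^2$) reduces to $C_0=\sqrt{\pi}\,C_1$; the factor $\sqrt{\pi}$ is absorbed by the transition from~\eqref{Smest} to~\eqref{est:restrict}. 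Sharpness is inherited automatically because the equivalence preserves constants.

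Your approach is the direct spherical-harmonic/Hankel-transform computation with Cauchy--Schwarz. This is essentially the content of Walther's proof, transplanted from the smoothing side to the trace side. What you gain is a self-contained argument that does not route through dispersive PDE and makes the origin of $C_1$ transparent; what the paper gains is brevity and a demonstration of its thesis that smoothing estimates encode trace theorems, including their sharp constants.
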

In particular, if we take $\sigma(t)=t^{s-1}$ and $w(t)=t^s$,
the result is equivalent to Theorem \ref{cor:bc}.
If we take $\sigma(t)=t^{-1/2}$ and $w(t)=(1+t^2)^{s/2}$ ($s>1/2$),
the result gives the best constant of estimate \eqref{EQ:nonhom}
in Theorem \ref{Th:Fres2} in the case $a(\xi)=|\xi|^2$.

\section{Smoothing estimates and trace theorems}
\label{S2}

Let us start with the result that was
established by the authors in \cite{RS3}.
This concerns the critical case
($\alpha=1/2$) of the Kato--Yajima's estimate
\begin{equation}\label{EQ:critical-est1}
\n{|x|^{\alpha-1}|D_x|^{\alpha}e^{ita(D_x)}\varphi(x)}_
{L^2\p{\R_t\times\R^n_x}}
\leq C\n{\varphi}_{L^2\p{\R^n_x}},
\end{equation}
which holds for $1-n/2<\alpha<1/2$
(see \cite{KY} for $0\leq \alpha<1/2$ and \cite{Su1} for $1-n/2<\alpha<1/2$
in the case $a(D_x)=-\Delta_x$.
For general $a(D_x)$, see \cite[Theorem 5.2]{RS4}). 
It was shown in
\cite{RS4} that this estimate for
values of $\alpha$ close to $1/2$ implies the same estimate
for smaller $\alpha$. Thus, the
critical case of this estimate with $\alpha=1/2$ is important,
especially since it can be applied to the well-posedness problems
of the derivative nonlinear Schr\"odinger equations
(see \cite{RS5}). However, the estimate fails in the critical
case $\alpha=1/2$
(see Watanabe \cite{W}, or the authors' paper \cite{RS4} for more general negative
results) 
and it is known that it is necessary to cut-off the
radial derivatives for the estimate to hold in the critical
case as well (see \cite{Su2} or \cite{RS4}). This can be done by
replacing operator
$|D_x|^\alpha$ by the Laplace-Beltrami operator on
the sphere at the level $\alpha=1/2$. In fact, it turns out
one can use any operator as long as
its symbol vanishes on a certain set related to the symbol
of the Laplace operator (the sphere is this case).
To explain
this precisely, let us formulate it for the equation
\begin{equation}\label{eq-cr1}
\left\{
\begin{aligned}
\p{i\partial_t+a(D_x)}\,u(t,x)&=0,\\
u(0,x)&=\varphi(x)\in L^2\p{\R^n_x},
\end{aligned}
\right.
\end{equation}
where the real-valued function
$a=a(\xi)\in C^\infty\p{\R^n\setminus0}$ 
is elliptic and positively homogeneous of order two,
that is, it satisfies
$a(\xi)>0$ and $a(\lambda \xi)=\lambda^2 a(\xi)$ for $\lambda>0$
and $\xi\neq0$.
We remark that under these condition we have
the dispersiveness, namely,
$\nabla a(\xi)\neq0$ by the Euler's identity
$a(\xi)=1/2\nabla a(\xi)\cdot\xi$ and the ellipticity of $a(\xi)$.
The case $a(\xi)=|\xi|^2$ corresponds to the usual Laplacian
$a(D_x)=-\Delta_x$.
\par
Let us define
$\b{(x(t),y(t)):\,t\in\R}$ to be the classical orbit, 
that is, the solution
of the Hamilton-Jacobi ordinary differential equations
\[
\left\{
\begin{aligned}
\dot{x}(t)&=\p{\nabla_\xi a}(\xi(t)), \quad\dot{\xi}(t)=0,
\\
x(0)&=0,\quad \xi(0)=\xi_0,
\end{aligned}
\right.
\]
and consider the set of the paths of all
classical orbits
\begin{equation}\label{Ga}
\begin{aligned}
\Gamma_a
&=\b{\p{x(t),\xi(t)}\,:\, t\in\R,\, \xi_0\in\R^n\setminus 0}
\\
&=\b{\p{\lambda\nabla a(\xi),\xi}\,:\,
 \xi\in\R^n\setminus 0,\,\lambda\in\R}.
\end{aligned}
\end{equation}
Let a pseudo-differential operator $\sigma(X,D)$
have symbol $\sigma(x,\xi)$ which is  smooth in $x\neq0$, $\xi\neq0$,
and which is 
positively homogeneous of order $-1/2$ with respect to $x$,
and of order $1/2$ with respect to $\xi$.
Suppose also the structure condition
\begin{equation}\label{EQ:gamma-a}
 \sigma(x,\xi)=0\quad \text{if}\quad (x,\xi)\in
 \Gamma_a\quad \text{and}\quad x\neq0.
\end{equation}
Then it was shown in \cite{RS3} that the solution 
$u=e^{ita(D_x)}\varphi$ to 
$(\ref{eq-cr1})$ satisfies
\begin{equation}\label{math-ann}
\n{\sigma(X,D_x) e^{ita(D_x)}\varphi(x)}_{L^2\p{\R_t\times\R^n_x}}
\leq C
\n{\varphi}_{L^2(\R^n)}
\end{equation}
if $n\geq2$ and the Gaussian curvature of
the hypersurface
\begin{equation}\label{hyper}
\Sigma_a=\b{\xi\in\Rn\backslash 0\,:\, a(\xi)=1}
\end{equation}
never vanishes. We note that the set \eqref{Ga} and the assumption
\eqref{EQ:gamma-a} are somehow related to Sommerfeld's 
radiation condition, see \cite{RS5}.
We also note that condition \eqref{EQ:gamma-atau2} in Theorem \ref{cor:sigma}
means that
$\check\sigma(x,\xi)$ or $\check\sigma(x,-\xi)$ satisfies
the structure condition \eqref{EQ:gamma-a},
where $\check\sigma(x,\xi)=\sigma(\xi,x)$
The typical example for such operator $\sigma(X,D_x)$ is given by
the elements of 
\begin{equation}\label{eqex1}
\Omega_1=|x|^{-1/2}
\p{\frac x{|x|}\wedge\frac{\nabla a(D_x)}{|\nabla a(D_x)|}}|D_x|^{1/2}.
\end{equation}
Another interesting example is the element of
\begin{equation}\label{EQ:LB-dual}
\Omega_2=|x|^{-1/2}
\p{\frac{\nabla a^*(x)}{|\nabla a^*(x)|}\wedge\frac{D_x}{|D_x|}}|D_x|^{1/2},
\end{equation}
where $a^*(x)$ is the { dual function} of $a(\xi)$ which is positively
homogeneous of order two and is characterised by the
relation $a^*(\nabla a(\xi))=1$.
We remark that the sum of the squares of all
elements of $\Omega_2$ forms the
main factor of the homogeneous
extension of the Laplace-Beltrami operator on the {dual hypersurface}
$\Sigma^*_a=\b{\nabla a(\xi):\xi\in\Sigma_a}$.
The dual function $a^*(x)$ can be also determined by the 
relation $\Sigma_{a^*}=\Sigma^*_a$.

Let us say a few words regarding the dual hypersurfaces as it is
shown in \cite[Theorem 3.1]{RS3}. Suppose $n\geq2$. For the convenience of the
formulation, in order to have the gradient of the function to be homogeneous
of order zero, we may consider an extension of a function from its fixed
level set as a positively homogeneous function of order one.
Thus, let $p\in C^\infty\p{\R^n\setminus0}$ be a positive and
positively homogeneous function of order one.
We assume that the Gaussian curvature of the hypersurface
$\Sigma_p=\{\xi:p(\xi)=1\}$ never vanishes and we denote
$\Sigma_p^{*}=\{\nabla p(\xi): \xi\in\Sigma_{p}\}.$
Then there exists a unique positive and positively homogeneous
function $p^*\in C^\infty\p{\R^n\setminus0}$ of order one
such that
$\Sigma_p^*=\Sigma_{p^*}$,
$\Sigma_{p^*}^*=\Sigma_p$,
and the Gaussian curvature of the hypersurface
$\Sigma_{p^*}$ never vanishes.
Moreover, $\nabla p:\Sigma_p\to\Sigma_{p^*}$ is a $C^\infty$-diffeomorphism
and $\nabla p^*:\Sigma_{p^*}\to\Sigma_p$ is its inverse.

The proof of Theorem \ref{Th:Fres2}
 relies on the critical case of the
limiting absorption principle which can be proved by reducing
its statement to a model situation by the canonical transform
method combined with weighted estimates for the transform
operators.
On the other hand, it can be reduced to a corresponding
smoothing estimate for the Laplace operator with {\em any}
critical operator, for example to the homogeneous extension
of the Laplace-Beltrami operator on the sphere, 
recovering, in particular, the result of \cite{Su2}. 
This particular case has been extended to
include small perturbations by Barcel\'o, Bennett and Ruiz \cite{BBR}.
For further details on these arguments we refer to authors'
paper \cite{RS3}. At the same time, the set $\Gamma_a$ 
corresponds to the Hamiltonian flow of $a(D_x)$, which is known
to play a role in such problems also in a more general setting
of manifolds. There, non-trapping conditions also enter
(e.g. Doi \cite{Do1, Do2} in the case of Schr\"odinger operators
on manifolds, using Egorov theorem, or 
Burq \cite{Bu} and Burq, G\'erard and 
Tzvetkov \cite{BGT} in the case of Schr\"odinger
boundary value problems, using propagation properties of
Wigner measures), and such conditions can be also
expressed in terms of properties of the set $\Gamma_a$. In our
case this simply corresponds to the dispersiveness of $a(D_x)$.
We also record the result for non-homogeneous weights, namely,
\begin{equation}\label{EQ:BK}
\n{\jp{x}^{-s}|D_x|^{1/2}e^{ita(D_x)}\varphi(x)}_
{L^2\p{\R_t\times\R^n_x}}
\leq C\n{\varphi}_{L^2\p{\R^n_x}},
\end{equation}
which holds for all $s>1/2$. For the Schr\"odinger case 
$a(\xi)=|\xi|^{2}$ and $n\geq 3$
this was shown by Ben-Artzi and Klainerman \cite{BK}, and it was
extended in \cite{RS4}, in particular,
to any homogeneous of order 2 function $a(\xi)$ with
$\nabla a(\xi)\not=0$ for $\xi\not=0$, $n\geq 1$.
We will now prove Theorem \ref{Th:Fres2}.

\begin{proof}[Proof of Theorem \ref{Th:Fres2}]
First note that the formal adjoint
$T^*:\mathcal{S}(\R_t\times\R^n_x)\to\mathcal{S}'(\R^n_x)$
of the operator
\[
T=e^{ita(D_x)}:\mathcal{S}(\R^n_x)\to\mathcal{S}'(\R_t\times\R^n_x),
\]
the solution operator 
to equation \eqref{eq-cr1}, is expressed as
\begin{equation}\label{rel:fr}
T^*\left[v(t,x)\right]=\FT^{-1}_{\xi}
  \left[
    \p{\FT_{t,x}v}\p{a(\xi),\xi}
  \right].
\end{equation}
Then, we claim that for any operator $A=A(X,D_x)$ acting on the variable $x$, 
the estimate
\begin{equation}\label{Smest}
\n{Ae^{ita(D_x)}\varphi}_{L^2\p{\R_t\times\R^n_x}}
\le
C_{0}\,
\Vert \,\varphi\,\Vert_{L^2\p{\R^n_x}}
\end{equation}
is equivalent to the estimate
\begin{equation}\label{est:restrict}
\begin{aligned}
\n{\widehat{A^*f}_{|\rho\Sigma_a}}_
{L^2\p{\rho\Sigma_a\,;\,2\rho^{n-1}d\omega/|\nabla a(\omega)|}}
&\le
\frac{(2\pi)^{n/2}}{\sqrt{\pi}} C_{0}\sqrt{\rho}\,
\n{f}_{L^2(\R^n)}
\\
&=
\frac{1}{\sqrt{\pi}} C_{0}\sqrt{\rho}\,
\|\widehat f\|_{L^2(\R^n)}
\quad\text{for all $\rho>0$},
\end{aligned}
\end{equation}
where $d\omega$ is the standard surface element of the hypersurface $\Sigma_a$
and $\rho\Sigma_a=\{\rho\omega:\,\omega\in\Sigma_a\}$.
Indeed, by \eqref{rel:fr} and Plancherel's theorem,
we have
\begin{equation}\label{TAdual}
\begin{aligned}
\n{T^*A^*v}^2_{L^2(\R^n)}
&=(2\pi)^{-n}\n{\p{\FT_{t,x}A^*v}(a(\xi),\xi)}^2_{L^2(\R^n_\xi)}
\\
&=(2\pi)^{-n}\int^\infty_0
  \p{\int_{\Sigma_a}
   \left|
     \p{\FT_{t,x}A^*v}\p{\rho^2,\rho\omega}
   \right|^2
    \,\frac{2\rho^{n-1}d\omega}{|\nabla a(\omega)|}}\,d\rho.
\end{aligned}
\end{equation}
Here we have used the change of variables $\xi\mapsto\rho\omega$
($\rho>0,\omega\in\Sigma_a$).
Then for $v(t,x)=g(t)f(x)$ we have
\[
\n{T^*A^*v}^2_{L^2(\R^n)}
=(2\pi)^{-n}\int^\infty_0\abs{\widehat g(\rho^2)\sqrt\rho}^2
  \p{\int_{\Sigma_a}
   \left|\frac1{\sqrt{\rho}}\p{\widehat{A^*f}}\p{\rho\omega}
   \right|^2
    \,\frac{2\rho^{n-1}d\omega}{|\nabla a(\omega)|}}\,d\rho.
\]
At the same time, by \eqref{Smest}, we have
\[
\n{T^*A^*v}^2_{L^2(\R^n)}
\leq C_{0}^{2}\n{v}^2_{L^2(\R_t\times\R^n_x)}
=C_{0}^{2}\n{g}_{L^2(\R)}^2\n{f}_{L^2(\R^n)}^2.
\]
Note that we have by Plancherel's theorem
\[
\n{g}_{L^2(\R)}^2=\frac1{2\pi}\n{\widehat g}_{L^2(\R)}^2=
\frac1{\pi}\int^\infty_0\abs{\widehat 
g(\rho^2)\sqrt\rho}^2  \,d\rho,
\]
if $\supp\widehat g\subset[0,\infty)$.
Combining all these relations and taking arbitrary $g$,
we obtain estimate \eqref{est:restrict}.

Conversely, noting that
$\p{\FT_{t,x}A^*v}\p{\rho^2,\rho\omega}=\widehat{A^*f}_{|\rho\Sigma_a}$,
where $f(\,\cdot\,)=\FT_{t}v(\rho^2,\,\cdot\,)$,
the right hand side of equality \eqref{TAdual}
can be estimated by using the first line of \eqref{est:restrict}, and we actually have 
\begin{align*}
\n{T^*A^*v}^2_{L^2(\R^n)}
&\le
\frac{C_{0}^2}{\pi} \int_0^\infty\int_{\R^n}
   \left|
     \p{\FT_tv}\p{\rho^2,x}
   \right|^2\rho
    \,d\rho dx
\\
&=
\frac{C_{0}^2}{2\pi} \int_0^\infty\int_{\R^n}
   \left|
     \p{\FT_tv}\p{\rho,x}
   \right|^2
    \,d\rho dx
\\
&\le
\frac{C_{0}^2}{2\pi} \n{\FT_tv}^2_{L^2(\R_t\times\R^n_x)}
=C_0^2\,\n{v}^2_{L^2(\R_t\times\R^n_x)}
\end{align*}
for any $v(t,x)\in L^2(\R_t\times\R^n_x)$,
which implies estimate \eqref{Smest}.
We remark that \eqref{est:restrict} implies a corresponding version of
the limiting absorption principle for $a(D_{x})$, which, in turn,
means that $A$ is $a(D_{x})$-supersmooth, see e.g. Kato \cite{Ka1} or
Ben-Artzi and Devinatz
\cite{BD1}. We will not discuss further details of this here as we do
not need it in this paper.

We have already reviewed examples of operators $A$ which satisfy
smoothing estimate \eqref{Smest}, hence the Fourier restriction
estimate \eqref{est:restrict}.
For example, using \eqref{EQ:BK} and \eqref{EQ:critical-est1},
we can take
\begin{equation}\label{specialop}
\begin{aligned}
&A_1=\jp{x}^{-s}|D_x|^{1/2}\qquad (s>1/2), \\
&A_2=|x|^{\alpha-1}|D_x|^{\alpha}\qquad (1-n/2<\alpha<1/2).
\end{aligned}
\end{equation}
We can also take $A=\sigma(X,D_x)$ which appeared
in estimate \eqref{math-ann}, especially the elements of the
operators $\Omega_1$ or $\Omega_2$ defined by \eqref{eqex1} or
\eqref{EQ:LB-dual},
but in this case we also
need the non-degenerate Gaussian
curvature condition on the hypersurface
$\Sigma_a$ defined by \eqref{hyper},
which is equivalent to $\det \nabla^2a(\xi)\not=0$ ($\xi\neq0$)
(see Miyachi \cite{Mi}, for example).
Their formal adjoints are given by
\begin{align*}
&A_1^*=|D_x|^{1/2}\jp{x}^{-s}\qquad (s>1/2),
\\
&A_2^*=|D_x|^{1-s}|x|^{-s}\qquad (n/2>s>1/2)
\end{align*}
(here we take $s=1-\alpha$ for $A_2$),
and also
\begin{align*}
&\Omega_1^*=
|D_x|^{1/2}
\p{\frac{\nabla a(D_x)}{|\nabla a(D_x)|}\wedge\frac x{|x|}}|x|^{-1/2},
\\
&\Omega_2^*=|D_x|^{1/2}
\p{\frac{D_x}{|D_x|}\wedge\frac{\nabla a^*(x)}{|\nabla a^*(x)|}}|x|^{-1/2}.
\end{align*}
Note that we have
$|\nabla a(\xi)|\geq C>0$ on $\Sigma_a$
since $\nabla a(\xi)\neq0$ ($\xi\neq0$) in our case.
From the construction, we have the same property
for $a^*$, as well.
We also note that
$\n{f}_{L^2_{s}(\R^n)}=(2\pi)^{-n/2}\|\widehat f\|_{H^s(\Rn)}$ and
$\n{f}_{\Dot{L}^2_{s}(\R^n)}=(2\pi)^{-n/2}\|\widehat f\|_{\Dot{H}^s(\Rn)}$,
for weighted $L^{2}$-spaces $L^{2}_{s}(\Rn)$ and $\Dot{L}^{2}_{s}(\Rn)$
defined by the norms
$\n{f}_{L^2_{s}(\R^n)}=\n{\jp{x}^{s}f}_{L^{2}(\Rn)}$ and
$\n{f}_{\Dot{L}^2_{s}(\R^n)}=\n{|x|^{s}f}_{L^{2}(\Rn)}$,
respectively.

Now, first, using \eqref{EQ:BK}, we get
\eqref{Smest} with operator $A_{1}$ in
\eqref{specialop}, from which,
on account of \eqref{est:restrict},
we can conclude the following trace result:
$$
\n{f_{|\rho\Sigma_a}}_{L^2(\rho\Sigma_a;\rho^{n-1}
d\omega)} \leq C \n{f}_{H^s(\Rn)}
\qquad (s>1/2).
$$
Here we remark that $\nabla a\neq0$ because of Euler's identity
$a(\xi)=(1/2)\xi\cdot\nabla a(\xi)>0$.
If we use $A_2$ in \eqref{specialop} instead, and estimate
\eqref{EQ:critical-est1},
we get
$$
\n{f_{|\rho\Sigma_a}}_{L^2(\rho\Sigma_a;\rho^{n-1}
d\omega)} \leq C \rho^{s-1/2} \n{f}_{\Dot{H}^s(\Rn)}
\qquad (n/2>s>1/2).
$$ 
In the critical cases, using estimate \eqref{math-ann} with
operators $\Omega_1$ or $\Omega_2$ defined by \eqref{eqex1} or
\eqref{EQ:LB-dual}, and formulae for their adjoints, we obtain
\begin{align*}
&\n{\p{\frac{\nabla a(x)}{|\nabla a(x)|}\wedge\frac {D_x}{|D_x|}}
f_{|\rho\Sigma_a}}_{L^2(\rho\Sigma_a;\rho^{n-1}
d\omega)}  \leq
C \n{f}_{\Dot{H}^{1/2}(\Rn)},
\\
&\n{\p{\frac x{|x|}\wedge\frac {\nabla a^*(D_x)}{|\nabla a^*(D_x)|}}
f_{|\rho\Sigma_a}}_{L^2(\rho\Sigma_a;\rho^{n-1}
d\omega)}  \leq
C \n{f}_{\Dot{H}^{1/2}(\Rn)}.
\end{align*}
This completes the proof.
\end{proof}

We now show that, in fact, the same argument yields a more general result.

\begin{proof}[Proof of Theorem \ref{cor:sigma}]
Let us assume the first part of condition \eqref{EQ:gamma-atau2}.
As for the case when we assume the second part,
the result is obtained straightforwardly from the relation
$$
\p{\sigma(X,D)f}(x)
=
\overline{(\overline\sigma(-X,D)(\overline f(-x)))(-x)}.
$$
Let us define $A(X,D)=|X|^{-1/2-\alpha}\check\sigma(X,D)|D_{x}|^{1/2-\beta}$,
where $\check\sigma(x,\xi)=\sigma(\xi,x)$
 Then 
its symbol $A(x,\xi)=|x|^{-1/2-\alpha}\check\sigma(x,\xi)|\xi|^{1/2-\beta}$ satisfies
$A(x,\xi)=0$ if $(x,\xi)\in
 \Gamma_a$ and $x\neq0, \xi\not=0.$ Moreover, the symbol
 $A(x,\xi)$ is homogeneous of order $-1/2$ in $x$ and of order
 $1/2$ in $\xi$. Consequently, 
 the operator $A(X,D)$ satisfies
 \eqref{EQ:gamma-a} and \eqref{math-ann}. The argument in
 the proof of Theorem \ref{Th:Fres2} yields
 the estimate \eqref{est:restrict}. 
 Taking $g$ so that ${\mathcal F}^{-1}\overline{g}(x)=|x|^{-1/2-\alpha}f(x)$,
 we obtain
 \begin{equation}\label{est:restrict2}
\rho^{1/2-\beta}\n{\mathcal F({\check\sigma(X,D)^{*}{\mathcal F}^{-1}\overline{g}})_{|\rho\Sigma_a}}_
{L^2\p{\rho\Sigma_a\,;\,\rho^{n-1}d\omega/|\nabla a|}}
\le
C\sqrt{\rho}\,
\n{g}_{\Dot{H}^{1/2+\alpha}(\R^n_x)}.
\end{equation}
We can now readily check that
\begin{equation}\label{taus}
\mathcal F({\check\sigma(X,D)^{*}{\mathcal F}^{-1}\overline{g}})(x)=
\overline{(\sigma(X,D)g)(x)}.
\end{equation}
Indeed, writing formally
$$
\p{\check\sigma(X,D)^{*}h}(x)={\mathcal F}_{\xi}^{-1}
\p{\int_{\Rn} e^{- i y\cdot\xi}\overline{\check\sigma(y,\xi)}\ h(y) dy}(x),
$$
we get
$$
\mathcal F\p{\check\sigma(X,D)^{*}{\mathcal F}^{-1}\overline{g}}(\eta)=
\int_{\Rn} e^{- i y\cdot\eta}\overline{\sigma(\eta,y)}{\mathcal F}^{-1}\overline{g}(y) dy,
$$
yielding \eqref{taus}.
Consequently, from estimate \eqref{est:restrict2} and
equality \eqref{taus} we obtain
\eqref{EQ:tau-2}. 

\end{proof}

%

On account of the argument in the proof of Theorem \ref{Fres:1},
it is apparent that the best constants of trace theorems for
the sphere $\mathbb S^{n-1}$
are obtained from those of smoothing estimates for Schr\"odinger case
$a(\xi)=|\xi|^2$.
\begin{proof}[Proof of Theorem \ref{cor:bc}]
The best constant $C_0$ of estimate \eqref{Smest} with
$A=A_2$ in \eqref{specialop} is
\begin{align*}
C_0
& =
 \bigg(\pi 2^{2\alpha-1} \frac{\Gamma(1-2\alpha) \Gamma(\frac{n}{2} +\alpha -1)}{\Gamma(1-\alpha)^2 \Gamma(\frac{n}{2} - \alpha)} \bigg)^{1/2}
\\
&=
 \bigg(\pi 2^{1-2s} \frac{\Gamma(2s-1) \Gamma(\frac{n}{2} -s)}{\Gamma(s)^2 \Gamma(\frac{n}{2} - 1+s)} \bigg)^{1/2},
\end{align*}
where $s=1-\alpha$
(see \cite{BS}).
We remark that this constant with $\alpha=0$, that is, $C_0=\sqrt{\pi/(n-2)}$
was given by an earlier work of
Simon \cite{Si}.
Then by the argument of the proof of Theorem \ref{Th:Fres2}, 
we have estimate \eqref{est:restrict} with the same $C_0$ as the best one.
Since it is equivalent to itself with $\rho=1$, we have the conclusion.
\end{proof}


\begin{proof}[Proof of Theorem \ref{cor:bc2}]
We recall that in general that
if $n\geq 2$ and $g$ is injective and
differentiable on $(0,\infty)$,
the best constant $C_0$ in the inequality
\begin{equation}\label{EQ:smW}
\n{w(|x|)^{-1}\sigma(|D_x|)^{-1}e^{it g(|D_x|)}\va(x)}_\L2tx\leq
C_0\n{\va}_{L^2(\R^n_x)}
\end{equation}
is given by
$$
C_0=\p{2\pi\mathop{\sup_{\rho>0}}_{k\in\N}
\b{\frac{\rho}{\sigma(\rho)^{2} g^\prime(\rho)}
\int_0^\infty J_{\nu(k)}(r\rho)^2 \frac{r}{w(r)^{2}} dr}}^{1/2},
$$
where for $\lambda>-1/2$ the Bessel function 
$J_\lambda$ of order
$\lambda$ is given by \eqref{EQ:BF} (see also \cite{BS}).
This expression for the best constants was obtained by Walther \cite{Wa2}, and it
can be used to analyse estimates for radially symmetric
equations by carefully looking at the asymptotic
behaviour of Bessel functions and subsequent integrals.
We now take $g(\rho)=\rho^{2}$ and
$A= w(|x|)^{-1} \sigma(|D_{x}|)^{-1}$. Estimate \eqref{EQ:smW} now implies 
\eqref{est:restrict} with $A^{*}=\sigma(|D_{x}|)^{-1} w(|x|)^{-1}.$
Consequently, we get
$$
\n{\sigma(|x|)^{-1}w(|D|)^{-1}f_{|\rho\mathbb S^{n-1}}}_{L^2(\rho\mathbb S^{n-1};\rho^{n-1}
d\omega)} \leq \frac1{\sqrt{\pi}}  C_{0} \sqrt{\rho}\n{f}_{L^{2}(\Rn)},
$$
which implies \eqref{EQ:glbc2}.
\end{proof}


\end{document}